\title{ARTICLE - A CURIE-WEISS MODEL OF SELF-ORGANIZED CRITICALITY - GAUSSIAN CASE}
\author{Rapha\"el Cerf & Matthias Gorny}
\date{\textit{MAJ: 04 juin 2013}}
\newcommand{\ind}[1]{\mathds{1}_{#1}}
\newcommand{\R}{\mathbb{R}}
\newcommand{\C}{\mathbb{C}}
\newcommand{\E}{\mathbb{E}}
\newcommand{\Bl}{\mathrm{B}}
\newcommand{\Lc}{\mathcal{L}}
\newcommand{\Ck}[1]{\mathcal{C}^{#1}}
\renewcommand{\Re}{\mathfrak{Re}}
\renewcommand{\Im}{\mathfrak{Im}}
\def\Dro{\smash{{D}^{\!\!\!\!\raise4pt\hbox{$\scriptstyle o$}}}}
\def\Ccro{\smash{{\mathcal{C}}^{\!\!\!\raise4pt\hbox{$\scriptstyle o$}}}}
\def\Aro{\smash{{A}^{\!\!\!\raise5pt\hbox{$\scriptstyle o$}}}}
\renewcommand{\a}{\alpha}
\renewcommand{\b}{\beta}
\newcommand{\g}{\gamma}
\newcommand{\G}{\Gamma}
\renewcommand{\d}{\delta}
\renewcommand{\epsilon}{\varepsilon}
\newcommand{\eps}{\varepsilon}
\newcommand{\z}{\zeta}
\renewcommand{\r}{\rho}
\newcommand{\s}{\sigma}
\renewcommand{\th}{\theta}
\renewcommand{\phi}{\varphi}
\renewcommand{\O}{\Omega}
\newtheorem{theo}{Theorem}
\newtheorem{prop}[theo]{Proposition}
\newtheorem{lem}[theo]{Lemma}
\renewenvironment{proof}{\noindent{\bf Proof.}}{\qed}
\begin{document}

\renewcommand{\contentsname}{Contents}
\renewcommand{\refname}{\textbf{References}}
\renewcommand{\abstractname}{Abstract}

\begin{center}
\begin{Huge}
A Curie-Weiss Model

of Self-Organized Criticality :\medskip

The Gaussian Case
\end{Huge} \bigskip \bigskip \bigskip \bigskip

\begin{Large} Matthias Gorny \end{Large} \smallskip
 
\begin{large} {\it Universit\'e Paris Sud} \end{large} \bigskip \bigskip 

\end{center}
\bigskip \bigskip \bigskip\bigskip \bigskip

\begin{abstract}
\noindent We try to design a simple model exhibiting self-organized criticality, which is amenable to a rigorous mathematical analysis. To this end, we modify the generalized Ising Curie-Weiss model by implementing an automatic control of the inverse temperature. With the help of exact computations, we show that, in the case of a centered Gaussian measure with positive variance $\s^{2}$, the sum $S_n$
of the random variables has fluctuations of order $n^{3/4}$ and that $S_n/n^{3/4}$ converges to the distribution $C \exp(-x^{4}/(4\s^4))\,dx$ where $C$ is a suitable positive constant.
\end{abstract}
\bigskip \bigskip \bigskip \bigskip \bigskip\bigskip \bigskip

\noindent {\it AMS 2010 subject classifications:} 60F05 60K35.

\noindent {\it Keywords:} Ising Curie-Weiss, self-organized criticality, Laplace's method.

\newpage

\section{Introduction}

\noindent In their famous article~\cite{BTW}, Per Bak, Chao Tang and Kurt Wiesenfeld showed that certain complex systems are naturally attracted by critical points, without any external intervention. These systems exhibit the phenomenon of self-organized criticality. It can be observed empirically or simulated on a computer in various models. However the mathematical analysis of these models turns out to be extremely difficult.\medskip

\noindent Our goal here is to design a model exhibiting self-organized criticality, which is as simple as possible, and which is amenable to a rigorous mathematical analysis. The idea is to start with the Ising Curie-Weiss model (see~\cite{Ellis}), which presents a phase transition, and to create a feedback from the configuration to the control parameters in order to converge towards a critical point.\medskip

\noindent The generalized Ising Curie-Weiss model (see~\cite{EN}) associated to a probability measure $\r$ in $\R$ (with some \og sub-Gaussian \fg{} conditions) and the inverse temperature $\b>0$ is defined through an infinite triangular array of real-valued random variables $(X_{n}^{k})_{1\leq k \leq n}$ such that, for all $n \geq 1$, $(X^{1}_{n},\dots,X^{n}_{n})$ has the distribution
\[\frac{1}{Z_{n}(\b)} \exp\left(\frac{\b}{2}  \frac{(x_{1}+\dots+x_{n})^{2}}{n}\right) \prod_{i=1}^{n} d\r(x_{i})\]
where $Z_{n}(\b)$ is a normalization. For any $n \geq 1$, we set $S_{n}=X^{1}_{n}+\dots+X^{n}_{n}$. Let $\s^2$ be the variance of $\r$. Ellis and Newman~\cite{EN} have proved the following result. If $\b<1/\s^{2}$, then the fluctuations of $S_{n}$ are of order $\sqrt{n}$ and $S_{n}/\sqrt{n}$ converges towards a specific Gaussian distribution. If $\b=1/\s^{2}$, then the fluctuations of $S_{n}$ are of order $n^{1-1/2k}$, where $k$ is an integer depending on the distribution $\r$. The point $1/\s^{2}$ is the critical value of the generalized Ising Curie-Weiss model.\medskip

\noindent In order to obtain a model which presents self-organized criticality we transform the previous probability distribution by \og replacing $\b$ by $n\,(x_{1}^{2}+\dots+x_{n}^{2})^{-1}$ \fg. Hence the model we consider is given by the distribution
\[\frac{1}{Z_{n}} \exp\left(\frac{1}{2}\frac{(x_{1}+\dots+x_{n})^{2}}{x_{1}^{2}+\dots+x_{n}^{2}}\right)\prod_{i=1}^{n} d\r(x_{i})\]
We refer to~\cite{CerfGorny} for a more detailed explanation. This model can be defined for any distribution $\r$, in particular for any Gaussian measure (contrary
to the generalized Ising Curie-Weiss model). In this note, we consider the case where $\r$ is the centered Gaussian measure with variance $\s^2$. With the help of exact computations, we show that $S_n/n^{3/4}$ converges to the distribution:
\[\left(\frac{\s}{\sqrt{2}}\,\,\G\left(\frac{1}{4}\right)\right)^{-1}\exp\left(-\frac{x^{4}}{4\s^{4}}\right)\,dx\]

\noindent The computations we make here are not possible for more general probability measures. In~\cite{CerfGorny} we consider a class of distributions having an even density with respect to the Lebesgue measure and satisfying some integrability conditions and we prove a similar convergence result.
\medskip

\noindent In section~\ref{model} we define properly our model for Gaussian measures and we state our main result. The proof is split in the two remaining sections.

\section{Main result}
\label{model}

\noindent We denote by $\r_{\s}$ the Gaussian distribution with mean $0$ and variance $\s^{2}>0$. We consider $(X_{n}^{k})_{1\leq k \leq n}$ an infinite triangular array of real-valued random variables such that for all $n \geq 1$, $(X^{1}_{n},\dots,X^{n}_{n})$ has the distribution $\widetilde{\mu}_{n,\s}$, where 
\begin{multline*}
d\widetilde{\mu}_{n,\s}(x_{1},\dots,x_{n})=\frac{1}{Z_{n}}\exp\left(\frac{1}{2}\frac{(x_{1}+\dots+x_{n})^{2}}{x_{1}^{2}+\dots+x_{n}^{2}}\right)\,\prod_{i=1}^{n}d\r_{\s}(x_{i})\\
=\frac{1}{(2\pi\s^{2})^{n/2}Z_{n}}\exp\left(\frac{1}{2}\frac{(x_{1}+\dots+x_{n})^{2}}{x_{1}^{2}+\dots+x_{n}^{2}}-\frac{x_{1}^{2}+\dots+x_{n}^{2}}{2\s^{2}}\right)\,\prod_{i=1}^{n}dx_{i}
\end{multline*}
and
\[Z_{n}=\int_{\R^{n}}(2\pi)^{-n/2}\exp\left(\frac{1}{2}\frac{(x_{1}+\dots+x_{n})^{2}}{x_{1}^{2}+\dots+x_{n}^{2}}-\frac{1}{2}(x_{1}^{2}+\dots+x_{n}^{2})\right)\,\prod_{i=1}^{n}dx_{i}\]
We define $S_{n}=X^{1}_{n}+\dots+X^{n}_{n}$ and $T_{n}=(X^{1}_{n})^{2}+\dots+(X^{n}_{n})^{2}$.
\medskip

\noindent We notice that the event $\{x_{1}^{2}+\dots+x_{n}^{2}=0\}$ is negligible for the measure $\r_{\s}^{\otimes n}$, so that the denominator in the exponential is almost surely positive. Moreover, $t \longmapsto t^{2}$ is a convex function, thus for any $n \geq 1$, $1 \leq Z_{n} \leq e^{n/2}< +\infty$.
\medskip

\begin{theo} Under $\widetilde{\mu}_{n,\s}$, $(S_{n}/n,T_{n}/n)$ converges in probability towards $(0,\s^{2})$. Moreover
\[\frac{S_{n}}{ n^{3/4}} \overset{\Lc}{\underset{n \to +\infty}{\longrightarrow}} \left(\frac{\s}{\sqrt{2}}\,\,\G\left(\frac{1}{4}\right)\right)^{-1}\exp\left(-\frac{y^{4}}{4\s^{4}}\right)\,dx\]
\label{theoCVgauss}
\end{theo}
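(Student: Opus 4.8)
The plan is to exploit the fact that $\r_\s$ is Gaussian, so that the only non-Gaussian ingredient in $\widetilde\mu_{n,\s}$ is the factor $\exp\big(\tfrac12 S_n^2/T_n\big)$, which depends on $(x_1,\dots,x_n)$ only through the pair $(S_n,T_n)$. First I would compute, for a standard Gaussian vector, the joint law of $(S_n,T_n)$: writing $S_n = \sqrt n\,\bar X$ and decomposing $T_n = S_n^2/n + \sum (X_i-\bar X)^2$, one has that $S_n/\sqrt n$ is $\Nc(0,\s^2)$, independent of $\sum(X_i-\bar X)^2$, which is $\s^2$ times a $\chi^2_{n-1}$ variable. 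Equivalently, the density of $(S_n,T_n)$ under $\r_\s^{\otimes n}$ is, up to normalizing constants, proportional to
\[
(nt - s^2)^{(n-3)/2}\,\exp\!\left(-\frac{t}{2\s^2}\right)\ind{nt > s^2}\,\frac{ds\,dt}{\s^{n}} .
\]
Multiplying by $e^{s^2/(2t)}$ and dividing by $Z_n$ gives the exact joint density of $(S_n,T_n)$ under $\widetilde\mu_{n,\s}$. So the whole problem reduces to a two-dimensional Laplace/saddle-point analysis of an explicit density — this is why the Gaussian case admits exact computations.

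Next I would rescale. Guided by the target, set $s = n^{3/4} u$ and $t = n v$ (with the refinement $t = n\s^2 + \sqrt n\, w$ expected near the end), and track the exponent
\[
\frac{s^2}{2t} - \frac{t}{2\s^2} + \frac{n-3}{2}\log\!\big(nt - s^2\big) .
\]
With $t \approx n\s^2$ the first term is $s^2/(2t)\approx n^{1/2}u^2/(2\s^2)$, while expanding $\tfrac{n}{2}\log(nt-s^2) = \tfrac n2\log(n^2 t/n - s^2)$ around $t=n\s^2$, $s=0$ produces, at the relevant order, a term $-n^{1/2}u^2/(2\s^2)$ that exactly cancels the $s^2/(2t)$ contribution at order $n^{1/2}$, together with a Gaussian term in $w$ (forcing $T_n/n\to\s^2$ with $\sqrt n$ fluctuations) and — crucially — a surviving term of order $1$ equal to $-u^4/(4\s^4)$. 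Carrying out this expansion carefully, with uniform control of the remainder, is the technical heart of the argument. I would then integrate out the $w$ (i.e. $T_n$) variable by a one-dimensional Laplace argument, leaving a density in $u$ proportional to $\exp(-u^4/(4\s^4))$, and identify the normalizing constant using $\int_\R \exp(-u^4/(4\s^4))\,du = \sqrt2\,\s\,\G(1/4)\big/\, $ (the change of variable $u^4/(4\s^4)=r$ gives $\int = \s\,4^{1/4}\G(5/4)=\s\,2^{-1/2}\G(1/4)$ after simplification), matching the stated constant. The convergence in probability of $(S_n/n,T_n/n)$ to $(0,\s^2)$ follows from the same density estimates, since the scaling $S_n\sim n^{3/4}=o(n)$ and $T_n = n\s^2 + O(\sqrt n)$ is already built in.

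To make the convergence rigorous rather than merely formal, I would prove it via a uniform estimate: show that the density of $S_n/n^{3/4}$ (obtained by integrating the joint density over $t$) converges pointwise, and uniformly on compacts, to $C\exp(-u^4/(4\s^4))$, and separately control the tails — for large $|u|$ using the constraint $nt>s^2$, i.e. $t > n^{1/2}u^2$, which makes $-t/(2\s^2)$ dominate, and for the regime where $t$ is far from $n\s^2$ using the concavity of the logarithm. Pointwise convergence of densities plus a tail bound gives convergence in $\Ll^1$, hence in distribution (and in fact something stronger). The main obstacle I anticipate is precisely the bookkeeping in the two-variable expansion: one must expand $\tfrac n2\log(nt-s^2)$ to high enough order in \emph{both} $s$ at scale $n^{3/4}$ and $t-n\s^2$ at scale $\sqrt n$, keep the cross terms, verify the exact cancellation at order $n^{1/2}$, and show the error terms vanish uniformly — a delicate but ultimately mechanical computation, which is what restricts the method to the Gaussian weight $\r_\s$.
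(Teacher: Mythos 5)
Your proposal is correct and follows the same overall architecture as the paper: obtain the exact joint density of $(S_{n},T_{n})$, then perform a two-variable Laplace analysis at the scales $n^{3/4}$ for $S_n$ and $\sqrt{n}$ for $T_n-n\s^2$. Where you genuinely diverge is in how you get that density. The paper devotes its entire Section 3 to computing the characteristic function of the law of $(Z,Z^{2})$, proving a contour-integral lemma for $\int_{\R}e^{itx-\z x^{2}/2}\,dx$ with complex $\z$, and applying Fourier inversion --- which is what forces the restriction $n\geq 5$ (integrability of $\Phi_n$). You instead invoke the classical independence of $\bar X$ and $\sum_i(X_{i}-\bar X)^{2}$ for a Gaussian sample, with laws $\Nc(0,n\s^{2})$ and $\s^{2}\chi^{2}_{n-1}$; a one-line change of variables $t=v+s^{2}/n$ then reproduces exactly the paper's density $(\s^{n}\sqrt{2^{n}\pi n}\,\G((n-1)/2))^{-1}(t-s^{2}/n)^{(n-3)/2}e^{-t/(2\s^{2})}\ind{nt>s^{2}}$, since $-s^{2}/(2n\s^{2})-(t-s^{2}/n)/(2\s^{2})=-t/(2\s^{2})$. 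This is shorter, avoids complex analysis entirely, and is valid for all $n\geq 2$; it changes nothing asymptotically but is arguably the cleaner route. Your Laplace step is the paper's: the cancellation at order $\sqrt{n}$ between $s^{2}/(2t)$ and the expansion of $\tfrac{n}{2}\log(t-s^{2}/n)$ (including the cancelling cross terms in $u^{2}w$), the surviving $-u^{4}/(4\s^{4})-w^{2}/(4\s^{4})$, and the tail control are precisely what the paper packages into the function $\psi(x,y)=\tfrac12(-x/y+y-\ln(y-x))$, its expansion around the minimum $(0,1)$, the lower bound on $\psi$ off a $\d$-neighbourhood, and dominated convergence against $\|f\|_{\infty}e^{-(x^{4}+y^{2})/8}$. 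Two minor remarks: your write-up leaves the uniform remainder control as a programme rather than executing it (the paper's $\psi$-based bounds are one concrete way to discharge it), and your parenthetical evaluation of the normalising integral has a factor-of-two slip: $\int_{0}^{+\infty}e^{-u^{4}/(4\s^{4})}\,du=\s\,4^{1/4}\G(5/4)=\s\,\G(1/4)/(2\sqrt{2})$, so that the integral over all of $\R$ is $\s\,\G(1/4)/\sqrt{2}$, which is the value you finally state and which matches the theorem; the two intermediate expressions you equate differ by the factor $2$ coming from $\int_{\R}=2\int_{0}^{+\infty}$.
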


\noindent To prove this theorem, we first compute, in section~\ref{lawSn/n}, the exact density of the law of $(S_{n},T_{n})$ under $\widetilde{\mu}_{n,\s}$, for $n$ large enough. Next, in section~\ref{ProofCVgauss}, we end the proof by using Laplace's method.

\section{Computation of the law of $(S_{n},T_{n})$}
\label{lawSn/n}

\noindent In this section we compute the law of $(S_{n},T_{n})$ under $\widetilde{\mu}_{n,\s}$.

\begin{lem} We denote by $\nu_{\s}$ the law of $(Z,Z^{2})$ where $Z$ is a Gaussian random variable with mean $0$ and variance $\s^{2}>0$. Under $\widetilde{\mu}_{n,\s}$, the law of $(S_{n},T_{n})$ is
\[\frac{1}{Z_{n}}\exp\left(\frac{x^{2}}{2y}\right)\,d\nu_{\s}^{*n}(x,y)\]
\label{loiSnTn1}
\end{lem}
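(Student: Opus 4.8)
The plan is to identify the law of $(S_n, T_n)$ by disintegrating $\widetilde{\mu}_{n,\s}$ along the map $(x_1, \dots, x_n) \mapsto (s, t) = (x_1 + \dots + x_n, x_1^2 + \dots + x_n^2)$. The key observation is that the exponential weight $\exp\bigl(\tfrac12 (x_1 + \dots + x_n)^2 / (x_1^2 + \dots + x_n^2)\bigr) = \exp(s^2/(2t))$ depends on the configuration $(x_1, \dots, x_n)$ only through $(s,t)$. Hence, writing $d\widetilde{\mu}_{n,\s} = \tfrac{1}{Z_n} \exp(s^2/(2t)) \, d\r_\s^{\otimes n}$, the pushforward under $(x_1, \dots, x_n) \mapsto (s,t)$ factors as $\tfrac{1}{Z_n} \exp(s^2/(2t))$ times the pushforward of $\r_\s^{\otimes n}$ under the same map.

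The second step is to recognize that pushforward. For each $i$, the pair $(X_i, X_i^2)$ under $\r_\s$ has law $\nu_\s$, and these pairs are i.i.d.; the map $(x_1, \dots, x_n) \mapsto (\sum x_i, \sum x_i^2)$ is exactly the coordinatewise sum of the pairs $(x_i, x_i^2)$. Therefore the pushforward of $\r_\s^{\otimes n}$ under $(x_1, \dots, x_n) \mapsto (s,t)$ is the $n$-fold convolution $\nu_\s^{*n}$. Combining with the first step gives
\[
\text{law of } (S_n, T_n) \text{ under } \widetilde{\mu}_{n,\s} \;=\; \frac{1}{Z_n} \exp\left(\frac{x^2}{2y}\right) \, d\nu_\s^{*n}(x,y),
\]
which is the claim. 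The same manipulation applied to the total mass recovers the stated formula $Z_n = \int \exp(x^2/(2y)) \, d\nu_\s^{*n}(x,y)$, consistent with the definition of $Z_n$ in section~\ref{model}.

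I expect the only real subtlety to be the justification that the exponential factor is a genuine (Borel-measurable, a.e.-defined) function of $(s,t)$ so that the change of variables is legitimate: one needs that $\{x_1^2 + \dots + x_n^2 = 0\}$ is $\r_\s^{\otimes n}$-negligible, which is already noted in the excerpt, and that on its complement $s^2/(2t)$ is a continuous function of $(s,t)$ with $t > 0$. Given that, the argument is just the elementary fact that if a probability measure has density $h$ with respect to $\mu$ and $h = g \circ \Phi$ for a measurable $\Phi$ and measurable $g$, then $\Phi_*(h \, d\mu) = g \cdot \Phi_*\mu$; no Jacobian computation is needed precisely because we never change variables inside $\R^n$, only transport along $\Phi$. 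One small point to state carefully is that $\nu_\s$ is supported on the parabola $\{(z, z^2) : z \in \R\}$, so $\nu_\s^{*n}$ is supported on sums of $n$ such points, on which $y = x_1^2 + \dots + x_n^2 \geq 0$ with $y = 0$ only at the origin — harmless since that point carries no mass for $n \geq 1$.
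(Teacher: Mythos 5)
Your proof is correct and is essentially the paper's own argument: the paper likewise tests against a bounded measurable $f$, absorbs the weight $\exp(x^2/(2y))$ into a function $h$ of $(s,t)$ alone, and identifies the pushforward of $\r_\s^{\otimes n}$ under $(x_1,\dots,x_n)\mapsto(\sum x_i,\sum x_i^2)$ with the convolution $\nu_\s^{*n}$ of the i.i.d.\ pairs $(X_i,X_i^2)$. Your remarks on measurability and the negligibility of $\{t=0\}$ only make explicit what the paper leaves implicit.
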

\vspace{-0.3cm}

\begin{proof} Let $f : \R^{2}\longrightarrow \R$ be a bounded measurable function. We have
\begin{multline*}
\E_{\tilde{\mu}_{n,\s}}(f(S_{n},T_{n}))=\frac{1}{Z_{n}}\int_{\R^{n}}f(x_{1}+\dots+x_{n},x_{1}^{2}+\dots+x_{n}^{2})\\
\hfill \exp\left(\frac{1}{2}\frac{(x_{1}+\dots+x_{n})^{2}}{x_{1}^{2}+\dots+x_{n}^{2}}\right) \,\prod_{i=1}^{n}d\r_{\s}(x_{i})
\end{multline*}
The function $h : (x,y) \in \R\times \R\backslash\{0\} \longmapsto f(x,y)\exp(x^{2}/(2y))$ is measurable. Therefore
\begin{multline*}
\E_{\tilde{\mu}_{n,\r}}(f(S_{n},T_{n}))=\frac{1}{Z_{n}}\int_{\R^{n}}h(x_{1}+\dots+x_{n},x_{1}^{2}+\dots+x_{n}^{2})\,\prod_{i=1}^{n}d\r_{\s}(x_{i})\\
=\frac{1}{Z_{n}}\int_{\R^{2n}}h(z_{1}+\dots+z_{n})\prod_{i=1}^{n}d\nu_{\s}(z_{i})=\frac{1}{Z_{n}}\int_{\R^{2}}h(z)\,d\nu_{\s}^{*n}(z)
\end{multline*}
Hence the announced law of $(S_{n},T_{n})$, under $\widetilde{\mu}_{n,\s}$.
\end{proof}
\medskip

\noindent We denote by $\G$ the gamma function defined by
\[\forall z>0 \qquad \G(z)=\int_{0}^{+\infty}x^{z-1}e^{-x}dx\]
We compute next the density of $\nu_{\s}^{*n}$:

\begin{prop} For $n\geq 5$, under $\widetilde{\mu}_{n,\s}$, the law of $(S_{n},T_{n})$ is
\[\frac{1}{\s^{n}C_{n}}\exp\left(\frac{x^{2}}{2y}-\frac{y}{2\s^{2}}\right)\left(y-\frac{x^{2}}{n}\right)^{(n-3)/2}\ind{x^{2}< n y}\,dx\,dy\]
where $C_{n}=Z_{n}\,\sqrt{2^{n}\pi n}\,\G((n-1)/2)$.
\label{loiSnTn2}
\end{prop}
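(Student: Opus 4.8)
By Lemma~\ref{loiSnTn1} the law of $(S_n,T_n)$ under $\widetilde\mu_{n,\s}$ is $Z_n^{-1}\exp(x^2/(2y))\,d\nu_\s^{*n}(x,y)$, so everything reduces to computing the density of $\nu_\s^{*n}$. By definition of $\nu_\s$, this measure is the law of $(S,T):=(Z_1+\dots+Z_n,\,Z_1^2+\dots+Z_n^2)$ where $Z_1,\dots,Z_n$ are i.i.d.\ with law $\r_\s$. First I would remove $\s$: writing $Z_i=\s Y_i$ with $(Y_i)$ i.i.d.\ standard Gaussian, $\nu_\s^{*n}$ is the image of $\nu_1^{*n}$ under $(x,y)\mapsto(\s x,\s^2 y)$, a map with Jacobian $\s^{3}$; hence it suffices to find the density $f_n$ of $\nu_1^{*n}$, and then $\nu_\s^{*n}$ has density $(x,y)\mapsto\s^{-3}f_n(x/\s,\,y/\s^2)$.

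To obtain $f_n$ I would invoke the orthogonal decomposition behind the independence of the empirical mean and variance of a Gaussian sample (Fisher--Cochran). With $\bar Y=(Y_1+\dots+Y_n)/n$ one has the algebraic identity $Y_1^2+\dots+Y_n^2=n\bar Y^2+\sum_{i=1}^n(Y_i-\bar Y)^2$, i.e.\ $T=S^2/n+V$ with $V:=\sum_{i=1}^n(Y_i-\bar Y)^2$ (here with $\s=1$); moreover $S\sim\Nc(0,n)$, $V\sim\chi^2_{n-1}$, and $S$ and $V$ are independent, because $\bar Y$ and the vector $(Y_i-\bar Y)_{1\le i\le n}$ are jointly Gaussian and uncorrelated. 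Thus $(S,T)$ is the image of the pair $(S,V)$, which has independent $\Nc(0,n)$ and $\chi^2_{n-1}$ marginals, under $(s,v)\mapsto(s,\,s^2/n+v)$, a diffeomorphism with Jacobian $1$ taking $\{v>0\}$ onto $\{x^2<ny\}$. Multiplying the two one-dimensional densities and substituting $v=y-x^2/n$, the Gaussian exponentials combine, $e^{-s^2/(2n)}e^{-v/2}=e^{-y/2}$, and one gets
\[
f_n(x,y)=\frac{1}{\sqrt{2\pi n}\,\,2^{(n-1)/2}\,\G\big(\tfrac{n-1}{2}\big)}\,e^{-y/2}\,\Big(y-\frac{x^2}{n}\Big)^{(n-3)/2}\,\ind{x^2<ny},
\]
valid as soon as $n\geq2$, hence in particular for $n\geq5$.

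It remains to reassemble the pieces. Reinstating $\s$ via the scaling above multiplies $f_n$ by the overall factor $\s^{-3}\cdot\s^{-(n-3)}=\s^{-n}$ (the power term contributes $\s^{-(n-3)}$, the exponential becomes $e^{-y/(2\s^2)}$, the indicator is unchanged), so $\nu_\s^{*n}$ has density $\s^{-n}\big(\sqrt{2\pi n}\,2^{(n-1)/2}\G(\tfrac{n-1}{2})\big)^{-1}e^{-y/(2\s^2)}(y-x^2/n)^{(n-3)/2}\ind{x^2<ny}$; multiplying by $Z_n^{-1}e^{x^2/(2y)}$ as dictated by Lemma~\ref{loiSnTn1} yields the announced density, once one checks the elementary identity $\sqrt{2\pi n}\,2^{(n-1)/2}=\sqrt{2^n\pi n}$, so that the normalising constant equals $Z_n\sqrt{2^n\pi n}\,\G(\tfrac{n-1}{2})=C_n$. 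The only real labour here is this bookkeeping of $\Gamma$-values and powers of $2$ and $\s$; the genuine input is the Cochran independence together with the $\chi^2_{n-1}$ law of $\sum_i(Y_i-\bar Y)^2$, which is precisely what spares us a direct computation of the $n$-fold convolution $\nu_\s^{*n}$. As a fallback, one can instead derive $f_n$ by conditioning on $T=y$: then $(Y_1,\dots,Y_n)/\sqrt y$ is uniform on the sphere $S^{n-1}$, so $S/\sqrt y$ is $\sqrt n$ times one coordinate of a uniform point of $S^{n-1}$, with density proportional to $(1-u^2)^{(n-3)/2}$ on $(-1,1)$; combined with $T\sim\chi^2_n$ (for $\s=1$) this reproduces the same formula after identifying the normalising constant of the sphere marginal.
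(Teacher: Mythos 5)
Your proof is correct, and it takes a genuinely different route from the paper's. The paper computes the characteristic function $\Phi_n$ of $\nu_1^{*n}$ explicitly via the contour-integral Lemma~\ref{lemclegauss}, checks that $\Phi_n$ is Lebesgue-integrable precisely when $n>4$ (which is where the hypothesis $n\geq 5$ comes from), and then recovers the density by Fourier inversion, recognizing the characteristic function of the $\G((n-1)/2,2)$ distribution along the way. You instead use the Fisher--Cochran decomposition $T=S^2/n+V$ with $V=\sum_i(Y_i-\bar Y)^2$, the independence of $S\sim\Nc(0,n)$ and $V\sim\chi^2_{n-1}$, and the unit-Jacobian change of variables $(s,v)\mapsto(s,s^2/n+v)$; the cancellation $e^{-s^2/(2n)}e^{-v/2}=e^{-y/2}$, the scaling in $\s$ giving the overall factor $\s^{-n}$, and the identification $\sqrt{2\pi n}\,2^{(n-1)/2}=\sqrt{2^n\pi n}$ are all accurate, so you land exactly on $C_n=Z_n\sqrt{2^n\pi n}\,\G((n-1)/2)$. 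What your argument buys is brevity and a slightly stronger statement: the density formula for $\nu_1^{*n}$ holds for every $n\geq 2$, the restriction $n\geq 5$ being an artefact of the Fourier-inversion method rather than of the result. What it costs is that it rests entirely on the Gaussian sample-mean/sample-variance independence, a structure that has no analogue for general $\r$, whereas the paper's characteristic-function computation is the template that its companion work generalizes to non-Gaussian densities. Your spherical-conditioning fallback is also sound and amounts to the same orthogonal decomposition written in polar form.
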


\noindent For simplicity, we assume that $\s^{2}=1$. We just write $\nu^{*n}$ for $\nu_{\s}^{*n}$. We denote by $\Phi_{n}$ its characteristic function. To get the previous proposition, we use the method of residue to compute $\nu^{*n}$ and a Fourier inversion formula to get the density of $\nu^{*n}$. For $(u,v) \in \R^{2}$, we have
\[\Phi_{n}(u,v)=\left(\Phi_{1}(u,v)\right)^{n}=\left(\E(e^{iuZ+ivZ^{2}})\right)^{n}=\left(\int_{\R}e^{iux+ivx^{2}}e^{-x^{2}/2}\,\frac{dx}{\sqrt{2 \pi}}\right)^{n}\]
We need some preliminary results:
\medskip

\noindent The Gamma distribution with shape $k>0$ and scale $\theta>0$, denoted by $\G(k,\th)$, is the probability distribution with density function
\[x \longmapsto \frac{x^{k-1}e^{-x/\theta}}{\G(k)\, \theta^{k}}\ind{x>0}\]
with respect to the Lebesgue measure on $\R$.
\medskip

\noindent The complex logarithm function (or the principle value of complex logarithm), denoted by $\mathrm{Log}$, is defined on $\O=\C\backslash]-\infty,0]$ by
\[\forall z=x+iy \in \O \qquad \mathrm{Log}(z)=\frac{1}{2}\ln(x^{2}+y^{2})+2i\,\mathrm{arctan}\left(\frac{y}{x+\sqrt{x^{2}+y^{2}}}\right)\]
If $\a\in \C$ and $z \in \O$, then the $\a$-exponentiation of $z$ is defined by 
\[z^{\a}=\exp(\a \mathrm{Log}(z))\]
By chapter XV of~\cite{Feller2}, for $k,\theta>0$, the characteristic function of $\G(k,\th)$ is
\[u \in \R \longmapsto(1-\theta i u)^{-k}\]

\noindent We can now prove the following key lemma:

\begin{lem} Let $t \in \R$ and $\z \in \C$ such that $\Re(\z)>0$. Then
\[\int_{\R}e^{itx-\z x^{2}/2}\,dx=\sqrt{\frac{2 \pi}{\Re(\z)}}\exp\left(-\frac{t^{2}}{2\z}\right)\left(1+i\,\frac{\Im(\z)}{\Re(\z)}\right)^{-1/2}\]
\label{lemclegauss}
\end{lem}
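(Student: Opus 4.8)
The plan is to prove the identity by analytic continuation in the parameter $\z$. First I would rewrite the right-hand side in a cleaner form. Write $\z=a+ib$ with $a=\Re(\z)>0$ and $b=\Im(\z)$; then $1+i\,\Im(\z)/\Re(\z)=\z/a$, which has real part $1>0$ and so lies in $\O$, and since $a>0$ one has $\mathrm{Log}(\z/a)=\mathrm{Log}(\z)-\ln a$. Hence
\[
\left(1+i\,\frac{\Im(\z)}{\Re(\z)}\right)^{-1/2}=\exp\left(-\tfrac12\mathrm{Log}(\z)+\tfrac12\ln a\right)=\sqrt{a}\,\,\z^{-1/2},
\qquad \z^{-1/2}:=\exp\left(-\tfrac12\mathrm{Log}(\z)\right),
\]
so the asserted equality is equivalent to
\[
F(\z):=\int_{\R}e^{itx-\z x^{2}/2}\,dx
=\sqrt{2\pi}\,\,\z^{-1/2}\exp\left(-\frac{t^{2}}{2\z}\right)=:G(\z),\qquad \Re(\z)>0.
\]

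Next I would check that $F$ and $G$ are both holomorphic on the open half-plane $H=\{\z\in\C:\Re(\z)>0\}$, which is a connected open set. For $G$ this is immediate: $\z\mapsto\exp(-t^{2}/(2\z))$ is holomorphic on $\C\setminus\{0\}$ and $\z\mapsto\z^{-1/2}=\exp(-\tfrac12\mathrm{Log}\,\z)$ is holomorphic on $\O\supset H$. For $F$, fix $\delta>0$: for every $\z$ with $\Re(\z)\geq\delta$ and every $x\in\R$ one has $|e^{itx-\z x^{2}/2}|=e^{-\Re(\z)x^{2}/2}\leq e^{-\delta x^{2}/2}$, an integrable bound uniform in $\z$, while for each fixed $x$ the map $\z\mapsto e^{itx-\z x^{2}/2}$ is entire. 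A standard argument (Fubini's theorem together with Morera's theorem, or differentiation under the integral sign) then shows $F$ is holomorphic on $\{\Re(\z)>\delta\}$, hence on all of $H$.

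Finally I would evaluate both sides on the positive real half-line. For $\z=a>0$, the integral $\int_{\R}e^{itx-ax^{2}/2}\,dx$ is, up to the factor $\sqrt{2\pi/a}$, the characteristic function at $t$ of the centered Gaussian law of variance $1/a$, so it equals $\sqrt{2\pi}\,a^{-1/2}e^{-t^{2}/(2a)}=G(a)$ (here $\z^{-1/2}=a^{-1/2}$ since $\mathrm{Log}$ agrees with $\ln$ on $(0,+\infty)$). Thus $F$ and $G$ coincide on $(0,+\infty)$, a subset of $H$ having accumulation points in $H$, and the identity theorem for holomorphic functions yields $F\equiv G$ on $H$, which is exactly the claimed formula. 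The computation is otherwise routine; the only points demanding care are the bookkeeping with the principal branch $\mathrm{Log}$ (to confirm that the stated right-hand side equals $\sqrt{2\pi}\,\z^{-1/2}e^{-t^{2}/(2\z)}$ and in particular defines a single-valued holomorphic function of $\z$) and the verification that the parameter integral $F(\z)$ is genuinely holomorphic on $H$ — both handled above.
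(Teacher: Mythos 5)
Your proof is correct, but it follows a genuinely different route from the paper. The paper completes the square in the exponent, shifts the line of integration back to the real axis by integrating $e^{-\z z^{2}/2}$ around a rectangle (Cauchy's theorem plus explicit bounds showing the vertical sides vanish as $R\to+\infty$), and then identifies the remaining real integral $\int_{0}^{+\infty}e^{-\z y/2}\,dy/\sqrt{y}$ with the characteristic function of the Gamma distribution $\G(1/2,2/a)$, which is where the factor $\bigl(1+i\,\Im(\z)/\Re(\z)\bigr)^{-1/2}$ comes from. You instead fix $t$ and regard both sides as functions of $\z$ on the half-plane $H=\{\Re(\z)>0\}$: after the (correct) branch bookkeeping showing the right-hand side equals $\sqrt{2\pi}\,\z^{-1/2}e^{-t^{2}/(2\z)}$ with the principal branch, you verify holomorphy of the parameter integral via a uniform integrable dominating function on $\{\Re(\z)\ge\delta\}$, check equality on $(0,+\infty)$ using only the real Gaussian characteristic function, and conclude by the identity theorem. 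Your argument buys a cleaner proof with no contour estimates, and it is arguably more self-contained: the paper's external input --- the formula $(1-\theta iu)^{-k}$ for the Gamma characteristic function --- is itself usually established by precisely the analytic-continuation argument you use. What the paper's computation buys in exchange is that it produces the answer constructively in the exact form stated (with the factor $(1+ib/a)^{-1/2}$ appearing naturally from the Gamma law), whereas your route requires knowing the closed form in advance and verifying that the stated expression is the correct single-valued holomorphic continuation.
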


\begin{proof} Let $t \in \R$ and $\z=a+ib \in \C$ such that $\Re(\z)>0$. We define
\[K(t,\z)=\int_{\R}e^{itx-\z x^{2}/2}\,dx\]
We factorize: 
\[ixt-\frac{1}{2}\z x^{2}=-\frac{1}{2}\z\left(x-\frac{it}{\z}\right)^{2}-\frac{t^{2}}{2\z}=-\frac{1}{2}\z\left(x-\frac{tb}{|\z|}-i\frac{ta}{|\z|}\right)^{2}-\frac{t^{2}}{2\z}\] 
Thus
\[e^{t^{2}/2\z}K(t,\z)=\int_{\R}e^{-\z(x-tb/|\z|-ita/|\z|)^{2}/2}\,dx\]
The change of variables $y=x-tb/|\z|$ gives us
\[e^{t^{2}/2\z}K(t,\z)=\int_{\R}e^{-\z(y-ita/|\z|)^{2}/2}\,dy=-\lim_{R \to +\infty}\int_{\g_{1}}e^{-\z z^{2}/2}\,dz\]
where the last integral is the contour integral of the entire function $z\longmapsto e^{-\z z^{2}/2}$, along the segment $\g_{1}$ in the complex plane with end points $R+ita/|\z|$ and $-R+ita/|\z|$.
\medskip

\noindent Let $\g$ be the rectangle in the complex plane joining successively the points $R+ita/|\z|$, $-R+ita/|\z|$, $-R$ and $R$. We apply the residue theorem: 
\[\int_{\g}e^{-\z z^{2}/2}\,dz=0\]
since $z\longmapsto \exp(-\z z^{2}/2)$ has no pole (see~\cite{Rudin}). We denote $\g_{1}$, $\g_{2}$, $\g_{3}$ and $\g_{4}$ the successive edges of the rectangle $\g$.

\begin{center}
\begin{tikzpicture}
\draw [>=stealth,->] [color=gray!100](-5,0) -- (5,0) ;
\draw [>=stealth,->] [color=gray!100](0,-0.5) -- (0,3) ;
\draw [>=stealth,->] [line width=1pt](-3,0) -- (3,0) ;
\draw [>=stealth,->] [line width=1pt](3,0) -- (3,2) ;
\draw [>=stealth,->] [line width=1pt](3,2) -- (-3,2) ;
\draw [>=stealth,->] [line width=1pt](-3,2) -- (-3,0) ;
\draw [color=gray!100](0,0) node[below left] {$0$};
\draw (-3,0) node[below] {$-R$};
\draw (3,0) node[below] {$R$};
\draw (-3,0) node[below] {$-R$};
\draw (2.7,2) node[above right] {$R+ita/|\z|$};
\draw (-2.7,2) node[above left] {$-R+ita/|\z|$};
\draw (1,2) node[above] {$\g_{1}$};
\draw (1,0) node[below] {$\g_{3}$};
\draw (-3,1) node[left] {$\g_{2}$};
\draw (3,1) node[right] {$\g_{4}$};
\end{tikzpicture}
\end{center}
\[\int_{\g_{3}}e^{-\z z^{2}/2}\,dz=\int_{-R}^{R}e^{-\z x^{2}/2}\,dx \underset{R \to +\infty}{\longrightarrow} \int_{\R}e^{-\z x^{2}/2}\,dx=2\int_{0}^{+\infty}e^{-\z x^{2}/2}\,dx\]
We make the change of variables $y=x^{2}$ on $]0,+\infty[$:
\begin{align*}
2\int_{0}^{+\infty}e^{-\z x^{2}/2}\,dx=\int_{0}^{+\infty}e^{-\z y/2}\,\frac{dy}{\sqrt{y}}&=\int_{0}^{+\infty}e^{-iby/2}e^{-ay/2}\,\frac{dy}{\sqrt{y}}\\
&=\sqrt{\frac{2}{a}}\,\G\left(\frac{1}{2}\right)\left(1+i\frac{b}{a}\right)^{-1/2}
\end{align*}
since we recognize, up to a normalization factor, the characteristic function of the Gamma distribution with shape $1/2$ and scale $2/a$. Moreover we have
\begin{align*}
\left|\int_{\g_{4}}e^{-\z z^{2}/2}\,dz\right|&=\left|\int_{0}^{1}\exp\left(-\frac{\z}{2}\left(R+\frac{iat}{|\z|}x\right)^{2}\right)\frac{iat}{|\z|}\,dx\right|\\
&\leq \frac{a|t|}{|\z|} \int_{0}^{1}\exp\left(-\frac{aR^{2}}{2}+\frac{Ratbx}{|\z|}+\frac{a}{2}\left(\frac{atx}{|\z|}\right)^{2}\right)\,dx\\
&\leq \frac{a|t|}{|\z|} \exp\left(-\frac{aR^{2}}{2}+\frac{Ra|tb|}{|\z|}+\frac{a}{2}\left(\frac{at}{|\z|}\right)^{2}\right)\underset{R \to +\infty}{\longrightarrow} 0
\end{align*}
Likewise
\[\int_{\g_{2}}e^{-\z z^{2}/2}\,dz\underset{R \to +\infty}{\longrightarrow} 0\]
Letting $R$ go to $+\infty$, we conclude that
\[\sqrt{\frac{2}{a}}\,\G\left(\frac{1}{2}\right)\left(1+i\frac{b}{a}\right)^{-1/2}+0-e^{t^{2}/2\z}K(t,\z)+0=0\]
Since $\G(1/2)=\sqrt{\pi}$, we obtain the identity stated in the lemma.
\end{proof}
\medskip

\noindent For $(u,v) \in \R^{2}$, setting $\z=1-2iv \in \{\,z \in \C : \Re(z)>0\,\}$, we have
\[\Phi_{n}(u,v)=\frac{1}{(2 \pi)^{n/2}}\left(\int_{\R}e^{iux-\z x^{2}/2}\,dx\right)^{n}\]
Applying lemma~\ref{lemclegauss} with $u$ and $\z$, we obtain the following proposition:

\begin{prop} The characteristic function $\Phi_{n}$ of the distribution $\nu^{*n}$ is
\[(u,v)\in \R^{2}\longmapsto \exp\left(-\frac{n}{2}\left(\frac{u^{2}}{1-2iv}+\mathrm{Log}(1-2iv)\right)\right)\]
\label{phi_n}
\end{prop}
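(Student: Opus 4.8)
The plan is to reduce everything to Lemma~\ref{lemclegauss}. Since $\nu^{*n}$ is the law of a sum of $n$ i.i.d.\ copies of $(Z,Z^{2})$, its characteristic function factorizes, $\Phi_{n}=(\Phi_{1})^{n}$, so it suffices to compute $\Phi_{1}$. First I would write $\Phi_{1}(u,v)=(2\pi)^{-1/2}\int_{\R}e^{iux+ivx^{2}-x^{2}/2}\,dx$ and notice that $ivx^{2}-x^{2}/2=-\z x^{2}/2$ with $\z=1-2iv$, which satisfies $\Re(\z)=1>0$, so that Lemma~\ref{lemclegauss} applies with $t=u$ and this value of $\z$.

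Plugging $\Re(\z)=1$ and $\Im(\z)=-2v$ into the lemma gives
\[\int_{\R}e^{iux-\z x^{2}/2}\,dx=\sqrt{2\pi}\,\exp\left(-\frac{u^{2}}{2(1-2iv)}\right)(1-2iv)^{-1/2},\]
hence $\Phi_{1}(u,v)=\exp\bigl(-u^{2}/(2(1-2iv))\bigr)\,(1-2iv)^{-1/2}$. Since $\Re(1-2iv)=1>0$, the point $1-2iv$ lies in $\O=\C\backslash]-\infty,0]$, so the principal branch is available and $(1-2iv)^{-1/2}=\exp\bigl(-\tfrac12\mathrm{Log}(1-2iv)\bigr)$. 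Combining the two exponential factors,
\[\Phi_{1}(u,v)=\exp\left(-\frac{1}{2}\left(\frac{u^{2}}{1-2iv}+\mathrm{Log}(1-2iv)\right)\right).\]

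Finally I would raise this to the $n$-th power, which is legitimate because $(\exp w)^{n}=\exp(nw)$ for every integer $n$ and every $w\in\C$, obtaining the announced formula for $\Phi_{n}$. The only points deserving care are the verification that $\Re(\z)>0$, so that Lemma~\ref{lemclegauss} is applicable and the $-1/2$ power is unambiguously defined through $\mathrm{Log}$, and the bookkeeping of that principal branch when splitting the two exponentials; neither is a genuine difficulty, so I do not expect a real obstacle in this argument.
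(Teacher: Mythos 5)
Your proof is correct and follows exactly the route the paper takes: factor $\Phi_{n}=(\Phi_{1})^{n}$, set $\z=1-2iv$ with $\Re(\z)=1>0$, apply Lemma~\ref{lemclegauss} with $t=u$, and rewrite $(1-2iv)^{-1/2}$ via the principal logarithm. Nothing to add.
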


\noindent Once we know the characteristic function $\Phi_{n}$ of the law $\nu^{*n}$, a Fourier inversion formula gives us its density. We first have to check that $\Phi_{n}$ is integrable with respect to the Lebesgue measure on $\R^{2}$.
\medskip

\noindent Let $(u,v) \in \R^{2}$. Since $(1-2iv)^{-1}=(1+2iv)/(1+4v^{2})$, we have
\[\Re\left(\frac{u^{2}}{1-2iv}+\mathrm{Log}(1-2iv)\right)=\frac{u^{2}}{1+4v^{2}}+\ln(\sqrt{1+4v^{2}})\]
Using Fubini's theorem, it follows that
\begin{align*}
\int_{\R^{2}}\left| \Phi_{n}(u,v) \right| \,du\,dv
&=\int_{\R^{2}}\exp\left(-\frac{nu^{2}}{2(1+4v^{2})}\right)(1+4v^{2})^{-n/4}  \,du\,dv\\
&=\int_{\R} (1+4v^{2})^{-n/4} \left(\int_{\R}\exp\left(-\frac{nu^{2}}{2(1+4v^{2})}\right)\,du \right) \,dv\\
&=\int_{\R} (1+4v^{2})^{-n/4} \sqrt{\frac{2 \pi(1+4v^{2})}{n}} \,dv\\
&=\sqrt{\frac{2 \pi}{n}} \int_{\R} (1+4v^{2})^{-(n-2)/4} \,dv
\end{align*}
The function $v\longmapsto (1+4v^{2})^{-(n-2)/4}$ is continuous on $\R$ and integrable in the neighbourhood of $+\infty$ and $-\infty$ if and only if $n>4$.

\begin{prop} If $n\geq 5$ then $\nu^{*n}$ has the density 
\[(x,y)\in \R^{2}\longmapsto \left(\sqrt{2^{n} \pi n}\,\G\left(\frac{n-1}{2}\right)\right)^{-1} \exp\left(-\frac{y}{2}\right)\left(y-\frac{x^{2}}{n}\right)^{(n-3)/2}\ind{x^{2}< n y}\]
with respect to the Lebesgue measure on $\R^{2}$.
\label{densgauss}
\end{prop}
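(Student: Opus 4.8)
The plan is to invoke the Fourier inversion theorem. Since $\Phi_n\in\Ll^1(\R^2)$ for $n\geq 5$ (as just verified) and $\nu^{*n}$ is a probability measure, $\nu^{*n}$ is absolutely continuous with respect to the Lebesgue measure on $\R^2$, with density
\[g_n(x,y)=\frac{1}{(2\pi)^2}\int_{\R^2}e^{-iux-ivy}\,\Phi_n(u,v)\,du\,dv,\qquad (x,y)\in\R^2.\]
First I would integrate in $u$. By Proposition~\ref{phi_n}, $\Phi_n(u,v)=\exp\bigl(-\tfrac{n}{2}\tfrac{u^2}{1-2iv}\bigr)\,(1-2iv)^{-n/2}$, and for fixed $v\in\R$ the number $\z=n/(1-2iv)$ satisfies $\Re(\z)=n/(1+4v^2)>0$. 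Hence Lemma~\ref{lemclegauss}, applied with $t=-x$ and this $\z$, and using $\Im(\z)/\Re(\z)=2v$ together with $1/\z=(1-2iv)/n$, gives
\[\int_{\R}e^{-iux}\exp\left(-\frac{n}{2}\frac{u^2}{1-2iv}\right)du=\sqrt{\frac{2\pi(1+4v^2)}{n}}\,\exp\left(-\frac{x^2(1-2iv)}{2n}\right)(1+2iv)^{-1/2}.\]

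Next I would clean up the branch factors. Since $1\pm 2iv\in\O$ have opposite arguments, $\mathrm{Log}(1-2iv)+\mathrm{Log}(1+2iv)=\ln(1+4v^2)$, whence $(1-2iv)^{1/2}(1+2iv)^{1/2}=\sqrt{1+4v^2}$ and therefore $\sqrt{1+4v^2}\,(1+2iv)^{-1/2}=(1-2iv)^{1/2}$; together with the factor $(1-2iv)^{-n/2}$ this produces $(1-2iv)^{-(n-1)/2}$. Writing $\exp(-x^2(1-2iv)/(2n))=e^{-x^2/(2n)}e^{ivx^2/n}$ and $e^{-ivy}e^{ivx^2/n}=e^{-iv(y-x^2/n)}$, the density becomes
\[g_n(x,y)=\frac{1}{(2\pi)^2}\sqrt{\frac{2\pi}{n}}\,e^{-x^2/(2n)}\int_{\R}e^{-iv(y-x^2/n)}\,(1-2iv)^{-(n-1)/2}\,dv.\]

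Finally I would recognize the remaining integral. The function $v\mapsto(1-2iv)^{-(n-1)/2}$ is the characteristic function of the Gamma law $\G((n-1)/2,2)$, and it belongs to $\Ll^1(\R)$ for $n\geq 5$; the one-dimensional Fourier inversion formula thus gives, with $w=y-x^2/n$,
\[\int_{\R}e^{-ivw}\,(1-2iv)^{-(n-1)/2}\,dv=2\pi\,\frac{w^{(n-3)/2}e^{-w/2}}{2^{(n-1)/2}\,\G((n-1)/2)}\,\ind{w>0}.\]
Substituting this in, the exponential factors collapse to $e^{-y/2}$, the indicator becomes $\ind{x^2<ny}$, and the constant simplifies through $\sqrt{2\pi n}\,2^{(n-1)/2}=\sqrt{2^n\pi n}$, which gives exactly the density claimed in Proposition~\ref{densgauss}. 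The points that most require care are the legitimacy of the two inversion formulas (ensured by the $\Ll^1$ bounds already established), the correct handling of the principal branch in the identity $\sqrt{1+4v^2}\,(1+2iv)^{-1/2}=(1-2iv)^{1/2}$, and the check $\Re(\z)>0$ needed to apply Lemma~\ref{lemclegauss}; none of these is deep, but the branch-cut bookkeeping is where a sign error would most easily creep in.
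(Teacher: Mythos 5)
Your proposal is correct and follows essentially the same route as the paper: Fourier inversion of $\Phi_n$, integration in $u$ via Lemma~\ref{lemclegauss} with $\z=n/(1-2iv)$, the branch identity $\sqrt{1+4v^{2}}\,(1+2iv)^{-1/2}=(1-2iv)^{1/2}$, and recognition of the remaining $v$-integral as the inverse Fourier transform of the $\G((n-1)/2,2)$ density at $y-x^{2}/n$. The constants and exponential simplifications all check out.
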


\begin{proof} We have seen that, if $n \geq 5$, then $\Phi_{n}$ is integrable on $\R^{2}$. The Fourier inversion formula (see~\cite{Rudin}) implies that $\nu_{\s}^{*n}$ has the density
\[f_{n} : (x,y)\longmapsto\frac{1}{(2 \pi)^{2}}\int_{\R^{2}}e^{-ixu-iyv}\,\Phi_{n}(u,v)\,du\,dv\]
with respect to the Lebesgue measure on $\R^{2}$. Let $(x,y) \in \R^{2}$. By Fubini's theorem,
\begin{align*}
f_{n}(x,y)&=\frac{1}{(2 \pi)^{2}}\int_{\R}\frac{e^{-iyv}}{(1-2iv)^{n/2}} \left(\int_{\R} \exp\left(-ixu-\frac{nu^{2}}{2(1-2iv)}\right)\,du\right)\,dv\\
&=\frac{1}{(2 \pi)^{2}}\int_{\R}\frac{e^{-iyv}}{(1-2iv)^{n/2}} \, K\left(-x,\frac{n}{1-2iv}\right)\,dv
\end{align*}
where $K$ is defined by
\[\forall a>0 \qquad \forall (t,b) \in \R^{2} \qquad K(t,a+ib)=\int_{\R}e^{itz-(a+ib) z^{2}/2}\,dz\]
Lemma~\ref{lemclegauss} implies that for any $v \in \R$,
\begin{align*}
K\left(-x,\frac{n}{1-2iv}\right)&=\sqrt{\frac{2 \pi (1+4v^{2})}{n}}\exp\left(-\frac{x^{2}(1-2iv)}{2n}\right)\left(1+2iv\right)^{-1/2}\\
&=\sqrt{\frac{2\pi}{n}}\exp\left(-\frac{x^{2}(1-2iv)}{2n}\right) \left(\frac{1+4v^{2}}{1+2iv}\right)^{1/2}\\
&=\sqrt{\frac{2\pi}{n}}\exp\left(-\frac{x^{2}(1-2iv)}{2n}\right) (1-2iv)^{1/2}
\end{align*}
Thus
\begin{align*}
f_{n}(x,y)&= \frac{1}{(2 \pi)^{2}} \sqrt{\frac{2\pi}{n}} \int_{\R}\exp\left(-iyv-\frac{x^{2}(1-2iv)}{2n}\right)(1-2iv)^{-(n-1)/2}\,dv\\
&=\frac{1}{\sqrt{2 \pi n}}  \exp\left(-\frac{x^{2}}{2n}\right) \frac{1}{2\pi}\int_{\R}\exp\left(-iv\left(y-\frac{x^{2}}{n}\right)\right)(1-2iv)^{-(n-1)/2}\,dv
\end{align*}
Therefore $\sqrt{2 \pi n} \exp(x^{2}/2n) f_{n}(x,y)$ is the inverse Fourier transform of the distribution $\G((n-1)/2,2)$ taken at the point $y-x^{2}/n$. Hence
\begin{multline*}
\sqrt{2 \pi n}\exp\left(\frac{x^{2}}{2n}\right)f_{n}(x,y)= \left(\G\left(\frac{n-1}{2}\right)2^{(n-1)/2}\right)^{-1} \left(y-\frac{x^{2}}{n}\right)^{(n-3)/2} \\ \times \exp\left(-\frac{y}{2}+\frac{x^{2}}{2n}\right) \ind{y>x^{2}/n}
\end{multline*}
Simplifying this expression, we get the proposition.
\end{proof}
\medskip

\noindent This previous result and proposition~\ref{loiSnTn1} imply that, for $n \geq 5$, under $\widetilde{\mu}_{n,\s}$, the law of $(S_{n},T_{n})$ on $\R^{2}$ is
\[C_{n}^{-1}\exp\left(\frac{x^{2}}{2y}-\frac{y}{2}\right)\left(y-\frac{x^{2}}{n}\right)^{(n-3)/2}\ind{x^{2}< n y}\,dx\,dy\]
We observe next that $(\s X_{n}^{1},\dots,\s X_{n}^{n})$ has the distribution $\widetilde{\mu}_{n,\s}$ if and only if $(X_{n}^{1},\dots,X_{n}^{n})$ has the distribution $\widetilde{\mu}_{n,1}$. Hence a straightforward change of variables gives us proposition~\ref{loiSnTn2}.

\section{Proof of theorem~\ref{theoCVgauss}}
\label{ProofCVgauss}

\noindent Let $\a,\b \in \, ]0,1]$, $n \geq 5$ and $f$ a bounded measurable function. The change of variables $(x,y)\longmapsto(n^{\a}x,n^{\b}y)$ yields
\begin{multline*}
\E_{\tilde{\mu}_{n,1}}\left(f\left(\frac{S_{n}}{n^{\a}},\frac{T_{n}}{n^{\b}}\right)\right)=\frac{n^{\a+\b}}{C_{n}}\int_{\R^{2}}f(x,y)\exp\left(\frac{n^{2\a-\b}x^{2}}{2y}-\frac{n^{\b}y}{2}\right)\\
\times \left(n^{\b}y-n^{2\a-1}x^{2}\right)^{(n-3)/2}\ind{n^{2\a}x^{2}< n^{\b+1} y}\,dx\,dy
\end{multline*}
Factorizing by $n^{(n-3)/2}$, we notice that all the terms in the integral are functions of $x^{2}/n^{2-2\a}$ and $y/n^{1-\b}$. We obtain the following proposition.

\begin{prop} Let $\a,\b \in \, ]0,1]$. If $\s^{2}=1$ and $n \geq 5$ then, under $\widetilde{\mu}_{n,\s}$, the distribution of $(S_{n}/n^{\a},T_{n}/n^{\b})$ is
\[\frac{n^{\a+\b}n^{(n-3)/2}}{C_{n}}\exp\left(-n\psi\left(\frac{x^{2}}{n^{2-2\a}},\frac{y}{n^{1-\b}}\right)\right)\phi\left(\frac{x^{2}}{n^{2-2\a}},\frac{y}{n^{1-\b}}\right)\,dx\,dy\]
where $\psi$ and $\phi$ are the functions defined on $D^{\!+}=\{\,(x,y) \in \R^{2} : y>x \geq 0\,\}$ by 
\[\psi : (x,y) \longmapsto \frac{1}{2}\left(-\frac{x}{y}+y-\ln(y-x)\right)\]
\[\phi : (x,y) \longmapsto (y-x)^{-3/2}\,\ind{D^{\!+}}(x,y)\]
\label{loiSnTn3}
\end{prop}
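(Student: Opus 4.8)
The statement is obtained from the change of variables $(x,y)\mapsto(n^{\a}x,n^{\b}y)$ already carried out just before the proposition; all that remains is to reorganise the resulting integrand. The plan is as follows. Starting from the density of $(S_{n},T_{n})$ under $\widetilde{\mu}_{n,1}$ (obtained by combining Proposition~\ref{loiSnTn1} and Proposition~\ref{densgauss} with $\s^{2}=1$), the substitution produces the Jacobian factor $n^{\a+\b}$ and the integrand displayed above the statement, after which everything is a bookkeeping of powers of $n$.

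\noindent Set $X=x^{2}/n^{2-2\a}$ and $Y=y/n^{1-\b}$. The arithmetic identities $(2\a-1)+(2-2\a)=1$, $\b+(1-\b)=1$ and $(2\a-\b)+(2-2\a)-(1-\b)=1$ give respectively $n^{2\a-1}x^{2}=nX$, $n^{\b}y=nY$ and $n^{2\a-\b}x^{2}/(2y)=nX/(2Y)$. Hence $n^{\b}y-n^{2\a-1}x^{2}=n(Y-X)$, so the power term equals $n^{(n-3)/2}(Y-X)^{(n-3)/2}$ — one factors $n^{(n-3)/2}$ out of the integral — and the exponential becomes $\exp\big(n(X/(2Y)-Y/2)\big)$. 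For the support constraint, multiplying $n^{2\a}x^{2}<n^{\b+1}y$ by $n^{-2}$ shows it is exactly $\{X<Y\}$; since $X=x^{2}/n^{2-2\a}\ge 0$ automatically, this is the condition $(X,Y)\in D^{+}$, on which $\ln(Y-X)$ and $(Y-X)^{-3/2}$ are well defined.

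\noindent It then remains to match the definitions of $\psi$ and $\phi$: one checks directly that $\exp(-n\psi(X,Y))\,\phi(X,Y)=\exp\big(n(X/(2Y)-Y/2)\big)(Y-X)^{n/2}(Y-X)^{-3/2}\ind{D^{+}}(X,Y)$, which equals $\exp\big(n(X/(2Y)-Y/2)\big)(Y-X)^{(n-3)/2}\ind{X<Y}$. Substituting back, the integrand is precisely $\frac{n^{\a+\b}n^{(n-3)/2}}{C_{n}}\exp(-n\psi(X,Y))\phi(X,Y)$ with $X=x^{2}/n^{2-2\a}$ and $Y=y/n^{1-\b}$, which is the claimed law. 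No step is genuinely difficult; the only point to watch is the arithmetic of the exponents of $n$, together with the observation that after rescaling the support constraint $\{x^{2}<ny\}$ falls exactly inside the domain $D^{+}$ on which $\psi$ and $\phi$ are defined, so that no boundary issue arises.
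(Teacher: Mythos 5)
Your proposal is correct and follows the paper's own route: the paper performs the same change of variables $(x,y)\mapsto(n^{\a}x,n^{\b}y)$ just before the proposition and then simply remarks that, after factorizing $n^{(n-3)/2}$, every term is a function of $x^{2}/n^{2-2\a}$ and $y/n^{1-\b}$. Your exponent bookkeeping ($n^{2\a-1}x^{2}=nX$, $n^{\b}y=nY$, $n^{2\a-\b}x^{2}/(2y)=nX/(2Y)$, and the rescaled support condition $X<Y$ with $X\geq 0$ automatic) checks out and fills in exactly the details the paper leaves implicit.
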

\vspace{-0.5cm}

\noindent We give next some properties of the map $\psi$. Especially they show why we choose $\a=3/4$ and $\b=1$ in the previous proposition in order to prove theorem~\ref{theoCVgauss}.

\begin{lem} The map $\psi$ has a unique minimum at $(0,1)$ and, in the neighbourhood of $(0,1)$,
\[\psi(x,y)-\frac{1}{2}=\frac{1}{4}(x^{2}+(y-1)^{2})+o(\|x,y-1\|^{2})\]
Moreover, we have 
\[\forall \d>0 \qquad \inf\,\{\,\psi(x,y):|x|\geq\d \quad\mbox{or}\quad |y-1|\geq\d\,\}>1/2\]
\label{lemGH}
\end{lem}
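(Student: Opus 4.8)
\textbf{Proof proposal for Lemma~\ref{lemGH}.}

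The plan is to study $\psi(x,y)=\frac{1}{2}(-x/y+y-\ln(y-x))$ on the open set $D^{\!+}=\{(x,y):y>x\geq 0\}$ by a direct critical-point analysis, then to treat the boundary and the behaviour at infinity separately so as to upgrade the local minimum to a global one. First I would compute the gradient. Writing $\partial_x\psi$ and $\partial_y\psi$, one finds $2\,\partial_x\psi=-1/y+1/(y-x)$ and $2\,\partial_y\psi=x/y^{2}+1-1/(y-x)$. Setting $\partial_x\psi=0$ forces $y=y-x$, i.e.\ $x=0$; substituting $x=0$ into $\partial_y\psi=0$ gives $1-1/y=0$, i.e.\ $y=1$. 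Hence $(0,1)$ is the only interior critical point, and since $\psi(0,1)=\frac{1}{2}(0+1-\ln 1)=1/2$, the candidate minimum value is $1/2$.

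Next I would establish that $(0,1)$ is a strict local minimum and read off the quadratic expansion. The cleanest route is a second-order Taylor expansion at $(0,1)$: with $u=x$, $w=y-1$, expand each term. One has $-x/y=-u(1-w+w^{2}-\cdots)=-u+uw+O(\|u,w\|^{3})$; $\,y=1+w$; and $-\ln(y-x)=-\ln(1+w-u)=-(w-u)+\frac{1}{2}(w-u)^{2}+O(\|u,w\|^{3})$. Summing and halving, the linear terms cancel (as they must, $(0,1)$ being critical) and the quadratic part is $\frac{1}{2}\big(uw+\frac{1}{2}(w-u)^{2}\big)=\frac{1}{4}(u^{2}+w^{2})$, after expanding $(w-u)^2=u^2-2uw+w^2$ and combining. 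This gives exactly $\psi(x,y)-\frac12=\frac14(x^2+(y-1)^2)+o(\|x,y-1\|^2)$, and in particular the Hessian at $(0,1)$ is $\frac12 I$, positive definite, so the minimum is strict.

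The last and genuinely substantive part is the global statement: that $\psi>1/2$ everywhere on $\overline{D^{\!+}}$ except at $(0,1)$, and the uniform bound $\inf\{\psi(x,y):|x|\ge\delta\text{ or }|y-1|\ge\delta\}>1/2$ for each $\delta>0$. Since $(0,1)$ is the unique interior critical point, any other point where the infimum could be attained must lie on the boundary $\{x=0\}$, on the line $\{y=x\}$, or escape to infinity. On $\{x=0\}$, $\psi(0,y)=\frac12(y-\ln y)$, and $g(y)=y-\ln y$ has $g'(y)=1-1/y$, so $g$ is strictly decreasing on $(0,1)$ and strictly increasing on $(1,\infty)$ with minimum $g(1)=1$; hence $\psi(0,y)\ge 1/2$ with equality only at $y=1$, and $\psi(0,y)\to+\infty$ as $y\to 0^+$ or $y\to+\infty$. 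Near the diagonal $y-x\to 0^+$ (with $x$ bounded), the term $-\ln(y-x)\to+\infty$ while $-x/y$ and $y$ stay bounded, so $\psi\to+\infty$; and for $x\to+\infty$ one checks $\psi\to+\infty$ as well (the dominant contribution being $y\ge x$ together with $-\ln(y-x)$, bounded below by an affine function of $x$ since $-x/y\ge -1$). Thus $\psi$ is proper on $\overline{D^{\!+}}$, i.e.\ its sublevel sets are compact; restricting to the compact set $\{\psi\le 1\}$, the continuous function $\psi$ attains its infimum over the closed set $\{\,|x|\ge\delta\text{ or }|y-1|\ge\delta\,\}\cap\{\psi\le 1\}$ at some point, which cannot be $(0,1)$, hence the infimum is $>1/2$; outside $\{\psi\le 1\}$ one trivially has $\psi>1\ge1/2$. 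Combining gives both the uniqueness of the global minimizer and the desired strict inequality. The main obstacle is the care needed in the compactness/properness argument at the boundary $y=x$ and at infinity, and in verifying that no spurious minimizer sits on $\{x=0\}$; the interior computation and the Taylor expansion are routine.
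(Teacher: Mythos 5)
Your proof is correct, but the route you take to the global statements is genuinely different from the paper's. The Taylor expansion at $(0,1)$ is the same computation in both (the paper expands $\psi(x,1+h)$ directly and gets the same quadratic form $\frac{1}{4}(x^{2}+h^{2})$). For the uniqueness of the minimum and the uniform bound, however, the paper relies on a single monotonicity observation: $\partial_x\psi=\frac{1}{2}\left(\frac{1}{y-x}-\frac{1}{y}\right)\geq 0$ with equality only at $x=0$, so $\psi(x,y)\geq\psi(0,y)=\frac{1}{2}(y-\ln y)\geq\frac{1}{2}$, which immediately identifies $(0,1)$ as the unique minimizer and, combined with $\psi(x,y)\geq\psi(\delta,y)$ for $x\geq\delta$, yields explicit lower bounds such as $\inf\{\psi(x,y):|y-1|\geq\delta\}\geq\frac{1}{2}(1+\delta-\ln(1+\delta))>\frac{1}{2}$. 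You instead run a critical-point analysis (no critical points with $x>0$, so any minimizer must sit on the face $\{x=0\}$, where the one-variable function $\frac{1}{2}(y-\ln y)$ is uniquely minimized at $y=1$) and then a properness/compactness argument to upgrade the pointwise strict inequality $\psi>\frac{1}{2}$ off $(0,1)$ to the uniform bound; the key estimate $2\psi\geq -1+x+(t-\ln t)$ with $t=y-x$ does show the sublevel sets are compact, so this works. The paper's argument is shorter and quantitative; yours is more systematic but non-constructive. One small imprecision: $(0,1)$ is not an interior point of $D^{\!+}$ (it lies on the edge $x=0$), so strictly speaking the statement is not that it is "the unique interior critical point" but that there are no critical points with $x>0$ and that the restriction to $\{x=0\}$ has its unique minimum at $y=1$; since your case analysis uses exactly these two facts, nothing breaks.
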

\vspace{-0.5cm}

\begin{proof} The map $\psi$ is $\Ck{2}$ on $D^{\!+}$ and, for fixed $y>0$, \[\frac{\partial \psi}{\partial x}(x,y)=\frac{1}{2}\left(-\frac{1}{y}+\frac{1}{y-x}\right)\geq 0\]
Equality holds if and only if $x=0$. Thus $x\longmapsto \psi(x,y)$ is increasing on $]0,y[$ and $\psi(0,y)=(y-\ln(y))/2$. Hence for any $(x,y) \in D^{\!+}\backslash\{(0,1)\}$,
\[\psi(x,y) > \frac{1}{2}(y-\ln(y))>\frac{1}{2}=\psi(0,1) \]
Therefore $\psi$ has a unique minimum at $(0,1)$. In the neighbourhood of $(0,0)$,
\begin{align*}
\psi(x,1+h)&=\frac{1}{2}(-x(1-h+o(h^{2}))+1+h-(h-x-\frac{1}{2}(h-x)^{2}+o((h-x)^{2})\\
&=\frac{1}{2}+\frac{h^{2}}{4}+\frac{x^{2}}{4}+o(\|x,h\|^{2})
\end{align*}
Hence the announced expansion of $\psi$ in the neighbourhood of $(0,1)$.
Moreover, if $|y-1|\geq \d$ and $x \in \,[0,y[$, then
\[\psi(x,y)\geq \frac{1}{2}(1+\d-\ln(1+\d))>\frac{1}{2}\]
If $x \geq \d$ and $y > x$, then
\[2\psi(x,y)\geq -\frac{\d}{y}+y-\ln(y-\d)>\inf_{y >\d}\left(-\frac{\d}{y}+y-\ln(y-\d)\right)>1\]
since $\d \neq 0$. Therefore $\inf\,\{\,\psi(x,y):|x|\geq\d \quad\mbox{or}\quad |y-1|\geq\d\,\}>1/2$.
\end{proof}
\medskip

\noindent By this lemma, for fixed $(x,y)$, when $n$ goes to $+\infty$,
\[\psi\left(\frac{x^{2}}{n^{2-2\a}},\frac{y}{n^{1-\b}}\right)-\frac{1}{2}\sim \frac{x^{4}}{4} n^{3-4\a}+\frac{n}{4}\left(\frac{y}{n^{1-\b}}-1\right)^{2}\]
That is why we take $\a=3/4$ and $\b=1$.
\medskip

\noindent Let us prove theorem~\ref{theoCVgauss}. Let $n\geq 1$ and let $f : \R^{2} \longrightarrow \R$ be a continuous bounded function. By proposition~\ref{loiSnTn3}, we have
\begin{multline*}
\E_{\tilde{\mu}_{n,1}}\left(f\left(\frac{S_{n}}{n^{3/4}},\frac{T_{n}}{n}\right)\right)=\frac{n^{7/4}n^{(n-3)/2}}{C_{n}}\int_{\R^{2}}f(x,y)\exp\left(-n\psi\left(\frac{x^{2}}{\sqrt{n}},y\right)\right)\\
\times \phi\left(\frac{x^{2}}{\sqrt{n}},y\right) \ind{\sqrt{n}y>x^{2}}\,dx\,dy
\end{multline*}
It follows from the expansion of $\psi$ in lemma~\ref{lemGH} that there exists $\d>0$ such that for $(x,y)\in D^{\!+}$, if $|x|<\d$ and $|y-1|<\d$, then, 
\[\psi(x,y)-\frac{1}{2}\geq \frac{1}{8}(x^{2}+(y-1)^{2})\]
We denote
\[A_{n}=\int_{x^{2}<\d \sqrt{n}}\int_{|y-1|<\d}\!f(x,y)\exp\left(-n\psi\left(\frac{x^{2}}{\sqrt{n}},y\right)\right)\phi\left(\frac{x^{2}}{\sqrt{n}},y\right) \ind{\sqrt{n}y>x^{2}}\,dx\,dy\]
The change of variables $(x,y) \longmapsto (x,y/\sqrt{n}+1)$ gives
\begin{multline*}
\sqrt{n}e^{n/2}A_{n}=\int_{x^{2}<\d \sqrt{n}}\int_{|y|<\d\sqrt{n}}f\left(x,\frac{y}{\sqrt{n}}+1\right)\exp\left(-n\psi\left(\frac{x^{2}}{\sqrt{n}},\frac{y}{\sqrt{n}}+1\right)\right)\\\exp\left(\frac{n}{2}\right)\phi\left(\frac{x^{2}}{\sqrt{n}},\frac{y}{\sqrt{n}}+1\right) \ind{y+\sqrt{n}>x^{2}}\,dx\,dy
\end{multline*}
Lemma~\ref{lemGH} implies that
\[n\psi\left(\frac{x^{2}}{\sqrt{n}},\frac{y}{\sqrt{n}}+1\right)-\frac{n}{2}\underset{n \to +\infty}{\longrightarrow}\frac{x^{4}}{4}+\frac{y^{2}}{4}\]
Moreover the continuity of $f$ and $\phi$ on $D^{\!+}$ gives us
\[f\left(x,\frac{y}{\sqrt{n}}+1\right)\phi\left(\frac{x^{2}}{\sqrt{n}},\frac{y}{\sqrt{n}}+1\right)\ind{y+\sqrt{n}>x^{2}}\ind{x^{2}<\d \sqrt{n}}\ind{|y|<\d\sqrt{n}}\underset{n \to +\infty}{\longrightarrow}f(x,1)\]
Finally the function inside the integral defining $\sqrt{n}e^{n/2}A_{n}$ is dominated by 
\[(x,y) \longmapsto \|f\|_{\infty}\exp\left(-\frac{1}{8}(x^{4}+y^{2})\right)\]
which is independent of $n$ and integrable with respect to the Lebesgue measure on $\R^{2}$. By Lebesgue's dominated convergence theorem, we have
\[\sqrt{n}e^{n/2}A_{n}\underset{n \to +\infty}{\longrightarrow}\int_{\R^{2}}f(x,1)e^{-x^{4}/4}e^{-y^{2}/4}\,dx\,dy=\sqrt{4 \pi}\int_{\R}f(x,1)e^{-x^{4}/4}\,dx\]
We define 
\[\Bl_{\d}=\{\,(x,y) \in D^{\!+}:|x|<\d,\,|y-1|<\d\,\}\]
and
\[B_{n}=\int_{(x^{2}/\sqrt{n},y) \in \Bl_{\d}^{c}} f(x,y)\exp\left(-n\psi\left(\frac{x^{2}}{\sqrt{n}},y\right)\right)\,\phi\left(\frac{x^{2}}{\sqrt{n}},y\right) \ind{\sqrt{n}y>x^{2}}\,dx\,dy\]
Let $\eps=\inf\,\{\,\psi(x,y):(x,y) \in \Bl_{\d}^{c}\}$,
\[|B_{n}|\leq e^{-(n-2)\eps}\|f\|_{\infty}\int_{\R^{2}}\exp\left(-2\psi\left(\frac{x^{2}}{\sqrt{n}},y\right)\right)\phi\left(\frac{x^{2}}{\sqrt{n}},y\right) \ind{\sqrt{n}y>x^{2}}\,dx\,dy\]
The change of variables $(x,y) \longmapsto (x n^{1/4},y)$ yields
\[\sqrt{n}e^{n/2}|B_{n}|\leq e^{2\eps}\|f\|_{\infty}e^{-n(\eps-1/2)}n^{3/4} \int_{\R^{2}}e^{-2\psi(x^{2},y)}\phi(x^{2},y)\ind{x^{2}<y} \,dx\,dy  \]
Lemma~\ref{lemGH} guarantees that $\eps>1/2$ and, using the change of variables given by the function $(x,y)\longmapsto(x,y+x^{2})$, we get
\[\int_{\R^{2}}e^{-2\psi(x^{2},y)}\phi(x^{2},y)\ind{x^{2}<y} \,dx\,dy \leq e \, \left(\int_{\R}e^{-x^{2}}\,dx \right)\left(\int_{0}^{+\infty}\frac{e^{-y}}{\sqrt{y}}\,dy \right) < +\infty\]
Therefore $\sqrt{n}e^{n/2}B_{n}$ goes to $0$ as $n$ goes to $+\infty$. Finally
\begin{multline*}
\int_{\R^{2}}f(x,y)\exp\left(-n\psi\left(\frac{x^{2}}{\sqrt{n}},y\right)\right)\phi\left(\frac{x^{2}}{\sqrt{n}},y\right) \ind{\sqrt{n}y>x^{2}}\,dx\,dy\\
=A_{n}+B_{n}\underset{+\infty}{=} \frac{e^{-n/2}}{\sqrt{n}}\left(\sqrt{4 \pi}\int_{\R}f(x,1)e^{-x^{4}/4}\,dx+o(1)+o(1)\right)
\end{multline*}
If $f=1$, we have
\[\frac{C_{n}}{n^{7/4}n^{(n-3)/2}}\underset{+\infty}{\sim}\sqrt{\frac{4 \pi}{n}}e^{-n/2}\int_{\R}e^{-x^{4}/4}\,dx\]
Hence
\begin{multline*}
\E_{\tilde{\mu}_{n,1}}\left(f\left(\frac{S_{n}}{n^{3/4}},\frac{T_{n}}{n}\right)\right)\underset{n \to +\infty}{\longrightarrow}\int_{\R}f(x,1)\frac{e^{-x^{4}/4}\,dx}{\int_{\R}e^{-u^{4}/4}\,du}\\=\int_{\R^{2}}f(x,y)\left(\frac{e^{-x^{4}/4}\,dx}{\int_{\R}e^{-u^{4}/4}\,du}\otimes \d_{1}(y)\right)
\end{multline*}
Since $(\s X_{n}^{1},\dots,\s X_{n}^{n})$ has the distribution $\widetilde{\mu}_{n,\s}$ if and only if $(X_{n}^{1},\dots,X_{n}^{n})$ has the distribution $\widetilde{\mu}_{n,1}$, we obtain that, under $\widetilde{\mu}_{n,\s}$,
\[\frac{S_{n}}{n^{3/4}} \overset{\Lc}{\underset{n \to +\infty}{\longrightarrow}} \frac{e^{-x^{4}/4\s^{4}}dx}{\int_{\R}e^{-y^{4}/4\s^{4}}\,dy} \qquad \mbox{and} \qquad \frac{T_{n}}{n} \overset{\Lc}{\underset{n \to +\infty}{\longrightarrow}} \s^{2}\]
We also get that $S_{n}/n$ converges in law (thus in probability) to $0$. Finally the ultimate change of variable $y=\sqrt{2}\s x^{1/4}$ implies that
\[\int_{\R}e^{-y^{4}/4\s^{4}}\,dy=2\int_{0}^{+\infty}e^{-y^{4}/4\s^{4}}\,dy=\frac{\s}{\sqrt{2}}\int_{0}^{+\infty}x^{1/4-1}e^{-x}\,dx=\frac{\s}{\sqrt{2}}\,\G\left(\frac{1}{4}\right)\]
This ends the proof of theorem~\ref{theoCVgauss}.

\nocite{Rudin}
\bibliographystyle{plain}
\bibliography{biblio}
\addcontentsline{toc}{section}{References}
\markboth{\uppercase{References}}{\uppercase{References}}

\end{document}